\theoremstyle{plain}
\newtheorem{theorem}{\indent\sc Theorem}[section]
\newtheorem{corollary}[theorem]{\indent\sc Corollary}
\newtheorem{proposition}[theorem]{\indent\sc Proposition}
\theoremstyle{definition}
\newtheorem{remark}[theorem]{\indent\sc Remark}
\newtheorem{example}[theorem]{\indent\sc Example}
\newcommand\on{\operatorname}
\renewcommand\div{\on{div}}
\newcommand\Hess{\on{Hess}}
\newcommand\Ric{\on{Ric}}
\newcommand\scal{\on{scal}}
\newcommand\vol{\on{vol(M)}}
\newcommand\func{\operatorname}
\newcommand\grad{\func{grad}}
\begin{document}

\title{Remarks on almost Riemann solitons \\with gradient or torse-forming vector field}
\author{Adara M. Blaga}
\date{}
\maketitle

\begin{abstract}
We consider almost Riemann solitons $(V,\lambda)$ in a Riemannian manifold and underline their relation to almost Ricci solitons. When $V$ is of gradient type, using Bochner formula, we explicitly express the function $\lambda$ by means of the gradient vector field $V$ and illustrate the result with suitable examples. Moreover, we deduce some properties for the particular cases when the potential vector field of the soliton is solenoidal or torse-forming, with a special view towards curvature.
\end{abstract}

\markboth{{\small\it {\hspace{2cm} Remarks on almost Riemann solitons}}}{\small\it{Remarks on almost Riemann solitons
\hspace{2cm}}}

\footnote{
2020 \textit{Mathematics Subject Classification}. 35Q51, 53B25, 53B50.}
\footnote{
\textit{Key words and phrases}. Riemann solitons, Ricci solitons, gradient vector field, torse-forming vector field.}

\bigskip

\section{Introduction}

Ricci solitons and Riemann solitons correspond to self-similar solutions of Ricci flow and Riemann flow, respectively,
introduced in 1982 \cite{ham-82} by R. S. Hamilton. The intrinsic geometric flows, \textit{Ricci flow}
\begin{equation}
\frac{\partial }{\partial t}g(t)=-2\Ric(g(t))
\end{equation}
and \textit{Riemann flow}
\begin{equation}
\frac{\partial }{\partial t}G(t)=-2R(g(t)),
\end{equation}
where $G:=\frac{1}{2}g\odot g$, for $\odot$ the Kulkarni-Nomizu product, $\Ric$ the Ricci curvature tensor and $R$ the Riemann curvature tensor of $g$, are two evolution equations formally similar to the heat equation, with applications in various fields. Extending the notion of Ricci flow to a nonlinear PDE which involves the Riemann curvature tensor, the Riemann flow has very similar properties to that of the Ricci flow \cite{ud}.

On a given $n$-dimensional smooth manifold $M$, a Riemannian metric $g$ and a non-vanishing vector field $V$ is said to define \textit{a Ricci soliton} \cite{ham} if there exists a real constant $\lambda$ such that
\begin{equation}\label{1}
\frac{1}{2}\pounds _{V}g+\func{Ric}=\lambda g,
\end{equation}
respectively, \textit{a Riemann soliton} \cite{hi} if there exists a real constant $\lambda$ such that
\begin{equation}\label{2}
\frac{1}{2}\pounds _{V}g\odot g+R=\lambda G,
\end{equation}
where $G:=\frac{1}{2}g\odot g$, $\pounds _{V}$ denotes the Lie derivative operator in the direction of the vector field $V$, $R$ and $\Ric$ are the Riemann and the Ricci curvature of $g$, respectively.
If $\lambda$ is a smooth function on $M$, we call $(V,\lambda)$ an \textit{almost Ricci} or an \textit{almost Riemann soliton}. Moreover, if $V$ is a gradient vector field, we call $(V,\lambda)$ a \textit{gradient almost Ricci} or a \textit{gradient almost Riemann soliton}. A soliton defined by $(V,\lambda)$ is said to be \textit{shrinking}, \textit{steady} or \textit{expanding} according as $\lambda$ is positive, zero or negative, respectively.

Ricci solitons are natural generalizations of Einstein manifolds and Riemann solitons are natural generalizations of spaces of constant sectional curvature. It was proved that on a compact Riemannian manifold, Ricci and Riemann solitons are gradient solitons (see \cite{pe}, \cite{hi}).

Note that the relations between Riemann flow and Ricci flow have been studied in \cite{ud} and some geometric properties of Riemann solitons in almost contact geometry, precisely, in Kenmotsu, Sasakian and $K$-contact manifolds, have been given in \cite{uday}, \cite{de}, \cite{v}.

In the present paper, we study some properties of almost Riemann and almost Ricci solitons, providing some relations between them. A central result is Theorem \ref{p1}, where for a gradient almost Riemann soliton, we explicitly compute the function $\lambda$ by means of $V$, without making all the laborious curvature computations. When $V$ is a solenoidal vector field of gradient type, we express the volume of a compact Riemannian manifold in terms of the Ricci tensor's norm. Moreover, we prove that any gradient Riemann soliton with potential vector field $V$ of constant length is steady and $V$ is a solenoidal vector field. Remark that the solenoidal vector fields of gradient type are the gradients of harmonic functions, and it is known that they are relevant in electrostatic.
We also consider the case when the potential vector field of the soliton is torse-forming and, in particular, we characterize the Ricci symmetric, Ricci semisymmetric manifolds admitting almost Riemann solitons with concircular vector field.

\section{Almost Riemann and almost Ricci solitons}

The Kulkarni-Nomizu product for two $(0,2)$-tensor fields $T_1$ and $T_2$ defined by:
$$(T_1 \odot T_2)(X,Y,Z,W):=T_1(X,W)T_2(Y,Z)+T_1(Y,Z)T_2(X,W)$$$$-T_1(X,Z)T_2(Y,W)-T_1(Y,W)T_2(X,Z),$$
for any $X,Y,Z,W\in \mathfrak{X} (M)$.
Then the Riemann soliton equation (\ref{2}) is explicitly expressed as
$$
2R(X,Y,Z,W)+[g(X,W)(\pounds _{V}g)(Y,Z)+g(Y,Z)(\pounds _{V}g)(X,W)
$$
$$-g(X,Z)(\pounds _{V}g)(Y,W)-g(Y,W)(\pounds _{V}g)(X,Z)]$$
\begin{equation}\label{3}=2\lambda [g(X,W)g(Y,Z)-g(X,Z)g(Y,W)],
\end{equation}
which by contraction over $X$ and $W$, gives
\begin{equation}\label{4}
\frac{1}{2}\pounds _{V}g+\frac{1}{n-2}\Ric=\frac{(n-1)\lambda-\div(V)}{n-2}g
\end{equation}
and further
\begin{equation}\label{9}
\scal=(n-1)[n\lambda-2\div(V)],
\end{equation}
provided $n\geq 3$.

If $n=2$, then (\ref{3}) implies that $M$ is an Einstein manifold with the Ricci curvature $\Ric=[\lambda-\div(V)]g$ and of scalar curvature $\scal=2[\lambda-\div(V)]$.

\begin{remark}
From the decomposition of the Riemann curvature tensor with respect to the Weil curvature tensor $\mathcal{W}$,
$$R=\mathcal{W}-\frac{\scal}{2(n-1)(n-2)}g\odot g+\frac{1}{n-2}\Ric\odot g,$$
if $(V,\lambda)$ defines an almost Riemann soliton on the $n$-dimensional Riemannian manifold $(M,g)$, $n\geq 3$, then the Weil curvature tensor vanishes. Note that properties of the Ricci solitons with vanishing Weil curvature have been given in \cite{ca}.
\end{remark}

Now, if we assume that $V$ is a solenoidal vector field, i.e. $\div(V)=0$, the equation (\ref{4}) becomes
$$\pounds _{V}g+\frac{2}{n-2}\Ric=2\frac{n-1}{n-2}\lambda\cdot g$$
and the scalar curvature equals to
$$\scal=n(n-1)\lambda,$$
so we can state:

\begin{proposition}
Let $(V,\lambda)$ define an almost Riemann soliton on an $n$\text{-}di\-men\-sio\-nal Riemannian manifold $(M,g)$, $n\geq 3$, with $V$ a solenoidal vector field. Then

i) $(V,\bar{\lambda})$, where $\bar{\lambda}=\frac{n-1}{n-2}\lambda$, defines an almost $\alpha$-Ricci soliton \cite{kk}, for $\alpha=\nolinebreak\frac{1}{n-2}$;

ii) $(\bar{V},\bar{\lambda})$, where $\bar{V}=(n-2)V$ and $\bar{\lambda}=(n-1)\lambda$, defines an almost Ricci soliton.
\end{proposition}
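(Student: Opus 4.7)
The proof should be essentially a direct manipulation of equation (\ref{4}), which was derived earlier by contracting the full Riemann soliton equation (\ref{3}). The solenoidal hypothesis $\div(V)=0$ is what collapses the right-hand side into a clean multiple of $g$, so the whole argument is an algebraic rescaling.

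The plan for part (i) is as follows. Starting from (\ref{4}) and substituting $\div(V)=0$, I obtain
\begin{equation*}
\frac{1}{2}\pounds_V g + \frac{1}{n-2}\Ric = \frac{n-1}{n-2}\lambda\, g.
\end{equation*}
Recalling from \cite{kk} that the almost $\alpha$-Ricci soliton equation reads $\frac{1}{2}\pounds_V g + \alpha\,\Ric = \bar{\lambda}\, g$, it suffices to identify $\alpha=\frac{1}{n-2}$ and $\bar{\lambda}=\frac{n-1}{n-2}\lambda$. This is immediate by inspection, and no further work is needed.

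For part (ii), the plan is to multiply the same reduced equation by $n-2$, obtaining
\begin{equation*}
\frac{n-2}{2}\pounds_V g + \Ric = (n-1)\lambda\, g,
\end{equation*}
and then use the basic linearity of the Lie derivative in the vector field, namely $\pounds_{(n-2)V} g = (n-2)\pounds_V g$, to rewrite this as
\begin{equation*}
\frac{1}{2}\pounds_{\bar{V}} g + \Ric = \bar{\lambda}\, g,
\end{equation*}
with $\bar{V}=(n-2)V$ and $\bar{\lambda}=(n-1)\lambda$. Comparing with the defining equation (\ref{1}) (read now with $\lambda$ a smooth function), this is precisely the almost Ricci soliton condition.

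There is no real obstacle here: the only subtlety is remembering that equation (\ref{4}) was derived under the standing assumption $n\geq 3$ (the denominator $n-2$ appears), which matches the hypothesis of the proposition, and that $\pounds$ is linear in its subscript vector field, so the rescaling $V\mapsto (n-2)V$ simply pulls the scalar factor out of the Lie derivative. One could equivalently observe that (ii) follows from (i) by multiplying through by the constant $n-2$ and absorbing this factor into $V$.
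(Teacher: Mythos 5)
Your proposal is correct and matches the paper's argument: the paper likewise substitutes $\div(V)=0$ into the contracted equation (\ref{4}) to get $\frac{1}{2}\pounds_V g+\frac{1}{n-2}\Ric=\frac{n-1}{n-2}\lambda\, g$, reads off (i) by inspection, and obtains (ii) by the same rescaling using linearity of the Lie derivative in $V$. No gaps.
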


\subsection{Gradient solitons}

In \cite{blag}, using the Bochner formula, we have proved that for any gradient vector field $V\in \mathfrak{X}(M)$, we have:
\begin{equation}\label{15}
\Delta(|V|^2)-2|\nabla V|^2=2\Ric(V,V)+2V(\div(V))
\end{equation}
and
\begin{equation}\label{12}
(\div(\pounds _{V}g))(V)=2V(\div(V))+2\Ric(V,V).
\end{equation}

Taking the divergence of (\ref{4}), we get:
\begin{equation}\label{10}
\frac{1}{2}\div(\pounds _{V}g)+\frac{1}{n-2}\div(\Ric)=\frac{(n-1)d\lambda-d(\div(V))}{n-2}.
\end{equation}

Differentiating (\ref{9}), we have:
\begin{equation}\label{11}
d(\scal)=(n-1)[nd\lambda-2d(\div(V))].
\end{equation}

Taking into account that $\div(\Ric)=\frac{1}{2}d(\scal)$, from (\ref{10}) and (\ref{11}), we obtain:
$$(\div(\pounds _{V}g))(V)=2V(\div(V))-(n-1)V(\lambda)$$
and using (\ref{12}), we get:
\begin{equation}\label{13}
\Ric(V,V)=-\frac{n-1}{2}V(\lambda).
\end{equation}

Computing now (\ref{4}) in $(V,V)$ and using (\ref{13}), we obtain:
\begin{equation}\label{14}
\div(V)=(n-1)\lambda+\frac{n-1}{2}\cdot \frac{V(\lambda)}{|V|^2}-\frac{n-2}{2}\cdot\frac{V(|V|^2)}{|V|^2}.
\end{equation}

From (\ref{14}), (\ref{13}) and (\ref{15}) we explicitly determine $\lambda$ by means of $V$:

\begin{theorem}\label{p1}
Let $(V,\lambda)$ define a gradient almost Riemann soliton on the $n$\text{-}di\-men\-sio\-nal Riemannian manifold $(M,g)$, $n\geq 3$. Then
$$\lambda=\frac{1}{2(n-1)|V|^2}[\Delta(|V|^2)-2|\nabla V|^2+(n-2)V(|V|^2)-2V(\div(V))]\linebreak[1]+\frac{1}{n-1}\div(V).$$
\end{theorem}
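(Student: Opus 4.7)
The plan is essentially to chain together the three identities already displayed just before the statement, namely (\ref{13}), (\ref{14}) and (\ref{15}), and isolate $\lambda$. The only unknowns appearing on the right-hand side of (\ref{14}) that are not already expressed in terms of $V$ are $\lambda$ itself and the directional derivative $V(\lambda)$, so the whole task reduces to eliminating $V(\lambda)$.

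First I would rewrite (\ref{14}) as
\[
\lambda = \frac{1}{n-1}\,\div(V) - \frac{1}{2}\cdot\frac{V(\lambda)}{|V|^{2}} + \frac{n-2}{2(n-1)}\cdot\frac{V(|V|^{2})}{|V|^{2}},
\]
simply by dividing by $(n-1)$ and moving the $V(\lambda)$ and $V(|V|^{2})$ terms. Next I would use (\ref{13}), which relates $V(\lambda)$ to $\Ric(V,V)$, to replace
\[
V(\lambda)= -\,\frac{2}{n-1}\,\Ric(V,V).
\]
At this stage only $\Ric(V,V)$ still involves genuine curvature information; but identity (\ref{15}), which is the Bochner-type identity recalled from \cite{blag}, lets me trade it for analytic data of $V$:
\[
\Ric(V,V)=\tfrac{1}{2}\Delta(|V|^{2})-|\nabla V|^{2}-V(\div(V)).
\]

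Substituting this back into the expression for $V(\lambda)$ and then into the rearranged form of (\ref{14}) gives directly
\[
\lambda=\frac{1}{n-1}\,\div(V)+\frac{1}{2(n-1)|V|^{2}}\bigl[\Delta(|V|^{2})-2|\nabla V|^{2}-2V(\div(V))+(n-2)V(|V|^{2})\bigr],
\]
which is the claimed formula after grouping. Throughout, $|V|^{2}$ appears in the denominator, so it is tacit that the statement is to be understood on the open set where $V$ does not vanish; I would note this explicitly. There is no real obstacle here, only bookkeeping: the whole content of the theorem is the observation that (\ref{13}) and (\ref{15}) together allow one to eliminate $V(\lambda)$ from (\ref{14}), so the heavy work was already done in establishing those three intermediate identities.
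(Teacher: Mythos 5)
Your proposal is correct and is essentially the paper's own argument: the paper derives the theorem precisely by combining (\ref{14}), (\ref{13}) and (\ref{15}) to eliminate $V(\lambda)$ and $\Ric(V,V)$, exactly as you do, and your algebra reproduces the stated formula. Your remark about working where $V\neq 0$ is consistent with the paper's standing assumption that the potential vector field is non-vanishing.
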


\begin{example}
On the $3$-dimensional manifold $M=\{(x,y,z)\in\mathbb{R}^3, z>0\}$, where $(x,y,z)$ are the standard coordinates in $\mathbb{R}^3$, with the Riemannian metric
$$g:=\frac{1}{z^2}(dx\otimes dx+dy\otimes dy+dz\otimes dz),$$ the pair $(V=\frac{\partial}{\partial z}, \lambda=-\frac{2}{z}-1)$ defines an expanding gradient almost Riemann soliton. Precisely, $V=\grad(f)$, for $f(x,y,z)=-\frac{1}{z}$ and one can check that $|V|^2=\frac{1}{z^2}$, $V(|V|^2)=-\frac{2}{z^3}$, $\Delta(|V|^2)=\frac{8}{z^2}$, $|\nabla V|^2=\frac{3}{z^2}$, $\div(V)=-\frac{3}{z}$, $V(\div(V))=\frac{3}{z^2}$ therefore, $\lambda=-\frac{2}{z}-1$ is obtained from Theorem \ref{p1}, without making all the laborious curvature computations.
\end{example}

\begin{example}
On the $3$-dimensional manifold $M=\{(x,y,z)\in\mathbb{R}^3, z>1\}$, where $(x,y,z)$ are the standard coordinates in $\mathbb{R}^3$, with the Riemannian metric
$$g:=e^{2z}(dx\otimes dx+dy\otimes dy)+dz\otimes dz,$$
the pair $(V=e^z\frac{\partial}{\partial z}, \lambda=2e^z-1)$ defines a shrinking gradient almost Riemann soliton. Precisely, $V=\grad(f)$, for $f(x,y,z)=e^z$ and one can check that $|V|^2=e^{2z}$, $V(|V|^2)=2e^{3z}$, $\Delta(|V|^2)=8e^{2z}$, $|\nabla V|^2=3e^{2z}$, $\div(V)=3e^z$, $V(\div(V))=3e^{2z}$ therefore, $\lambda=2e^z-1$ is immediately obtained from Theorem \ref{p1}.
\end{example}

\begin{remark}
Similarly, if $(V,\lambda)$ defines a gradient almost Ricci soliton, then, in the same way, we can explicitly express the function $\lambda$ in terms of $V$, namely
\begin{equation}\label{e5}
\lambda=\frac{1}{2|V|^2}[\Delta(|V|^2)-2|\nabla V|^2+V(|V|^2)-2V(\div(V))].
\end{equation}
\end{remark}

\begin{example}
On the $3$-dimensional manifold $M=\{(x,y,z)\in\mathbb{R}^3, z>0\}$, where $(x,y,z)$ are the standard coordinates in $\mathbb{R}^3$, with the Riemannian metric
$$g:=\frac{1}{z^2}(dx\otimes dx+dy\otimes dy+dz\otimes dz),$$ the pair $(V=\frac{\partial}{\partial z}, \lambda=-\frac{1}{z}-2)$ defines an expanding gradient almost Ricci soliton. Precisely, $V=\grad(f)$, for $f(x,y,z)=-\frac{1}{z}$ and one can check that $|V|^2=\frac{1}{z^2}$, $V(|V|^2)=-\frac{2}{z^3}$, $\Delta(|V|^2)=\frac{8}{z^2}$, $|\nabla V|^2=\frac{3}{z^2}$, $\div(V)=-\frac{3}{z}$, $V(\div(V))=\frac{3}{z^2}$ therefore, $\lambda=-\frac{1}{z}-2$ is obtained from (\ref{e5}), without making all the laborious curvature computations.
\end{example}

\begin{example}
On the $3$-dimensional manifold $M=\{(x,y,z)\in\mathbb{R}^3, z>1\}$, where $(x,y,z)$ are the standard coordinates in $\mathbb{R}^3$, with the Riemannian metric
$$g:=e^{2z}(dx\otimes dx+dy\otimes dy)+dz\otimes dz,$$
the pair $(V=e^z\frac{\partial}{\partial z}, \lambda=e^z-2)$ defines a shrinking gradient almost Ricci soliton. Precisely, $V=\grad(f)$, for $f(x,y,z)=e^z$ and one can check that $|V|^2=e^{2z}$, $V(|V|^2)=2e^{3z}$, $\Delta(|V|^2)=8e^{2z}$, $|\nabla V|^2=3e^{2z}$, $\div(V)=3e^z$, $V(\div(V))=3e^{2z}$ therefore, $\lambda=e^z-2$ is immediately obtained from (\ref{e5}).
\end{example}

If $\lambda$ is a constant and $V=\grad(f)$ is a gradient vector field of constant length, from (\ref{13}) we get $\Ric(V,V)=0$ and from (\ref{15}), we obtain $|\nabla V|^2+V(\div(V))=0$. Then Theorem \ref{p1} implies $\lambda=\frac{1}{n-1}\div(V)$, which by integration, in the compact case, give $\lambda=0$, therefore, $\div(V)=0$. In this case,
$f$ is a harmonic function, hence it is constant (by the maximum principle), and $V=0$. Therefore, we can state:

\begin{theorem}
For $n\geq 3$, there exists no non-trivial compact gradient Riemann soliton $(V,\lambda)$ with $V$ of constant length.
\end{theorem}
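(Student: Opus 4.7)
The plan is to feed the two structural hypotheses of the theorem---that $\lambda$ is constant (since $(V,\lambda)$ is a genuine, not almost, Riemann soliton) and that $|V|^{2}$ is constant---into the identities already derived for a gradient almost Riemann soliton, and then close the argument by integration together with the maximum principle on the compact manifold.

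For the first step, from $V(\lambda)=0$, equation (\ref{13}) yields $\Ric(V,V)=0$, and from $V(|V|^{2})=0$ together with $\Delta(|V|^{2})=0$, the Bochner-type identity (\ref{15}) reduces to $|\nabla V|^{2}+V(\div(V))=0$. Plugging these three vanishings into the explicit formula of Theorem \ref{p1} produces a clean cancellation in the bracketed term and leaves the pointwise identity $\lambda=\frac{1}{n-1}\div(V)$.

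The second step is where compactness enters: integrating $\lambda=\frac{1}{n-1}\div(V)$ against the Riemannian volume element and applying the divergence theorem forces $\lambda=0$; because the identity holds pointwise, $\lambda=0$ also yields $\div(V)\equiv 0$ on $M$. Writing $V=\grad(f)$, this reads $\Delta f=0$, so $f$ is harmonic on the compact manifold $M$ and therefore constant by the maximum principle. Hence $V=0$, contradicting the hypothesis of non-triviality. I do not foresee a genuine obstacle: the cancellation in Theorem \ref{p1} is mechanical, and the only delicate point is the two-step passage from the integrated equality to the pointwise conclusion $\div(V)\equiv 0$, which is automatic from the pointwise relation between $\lambda$ and $\div(V)$.
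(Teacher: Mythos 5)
Your proposal is correct and follows essentially the same route as the paper: use (\ref{13}) to get $\Ric(V,V)=0$, use (\ref{15}) with $|V|^2$ constant to get $|\nabla V|^2+V(\div(V))=0$, plug into Theorem \ref{p1} to obtain $\lambda=\frac{1}{n-1}\div(V)$, then integrate over the compact manifold to conclude $\lambda=0$, $\div(V)=0$, and finally $V=\grad(f)=0$ by harmonicity and the maximum principle.
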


Also we obtain:

\begin{proposition}
Let $(V,\lambda)$ define a gradient Riemann soliton on an $n$\text{-}di\-men\-sio\-nal compact Riemannian manifold $(M,g)$, $n\geq 3$. Then
$$\lambda\cdot\vol=\frac{n-2}{2(n-1)}\int_M \frac{V(|V|^2)}{|V|^2}d\mu_g.$$
\end{proposition}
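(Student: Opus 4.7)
The plan is to integrate a pointwise identity for $\div(V)$ over the compact manifold $M$ and use the divergence theorem, exploiting that $\lambda$ is a genuine constant (not merely a function) since we are dealing with a Riemann soliton, not an almost Riemann soliton.

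First, I would return to formula (\ref{14}) established earlier:
$$\div(V)=(n-1)\lambda+\frac{n-1}{2}\cdot \frac{V(\lambda)}{|V|^2}-\frac{n-2}{2}\cdot\frac{V(|V|^2)}{|V|^2}.$$
Because $\lambda$ is constant, the middle term vanishes: $V(\lambda)=d\lambda(V)=0$. This collapses the identity to
$$\div(V)=(n-1)\lambda-\frac{n-2}{2}\cdot\frac{V(|V|^2)}{|V|^2}.$$

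Next, I would integrate this equation over $M$ against the Riemannian volume form $d\mu_g$. Since $M$ is compact (and without boundary, as is implicit), the divergence theorem gives $\int_M \div(V)\, d\mu_g = 0$. The constant $(n-1)\lambda$ integrates to $(n-1)\lambda\cdot\vol$, so rearranging yields
$$\lambda\cdot\vol=\frac{n-2}{2(n-1)}\int_M \frac{V(|V|^2)}{|V|^2}d\mu_g,$$
which is exactly the claim.

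The only delicate point is that $|V|^2$ appears in the denominator, so one should implicitly assume $V$ does not vanish on $M$ (the paper's standing convention that $V$ is non-vanishing takes care of this). Apart from that subtlety, the argument is essentially a one-line consequence of (\ref{14}) combined with the constancy of $\lambda$, so no curvature computations are needed beyond those already encoded in the derivation of (\ref{14}).
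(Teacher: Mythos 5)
Your proof is correct and follows exactly the route the paper intends: specialize identity (\ref{14}) to constant $\lambda$ (so $V(\lambda)=0$), integrate over the compact manifold, and kill $\int_M\div(V)\,d\mu_g$ by the divergence theorem. This is essentially the paper's own (implicit) argument, including the reliance on the standing assumption that $V$ is non-vanishing so that $|V|^2$ may appear in the denominator.
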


If $V=\grad{(f)}$, equation (\ref{4}) becomes
\begin{equation}\label{e1}
\Hess(f)+\frac{1}{n-2}\Ric=\frac{(n-1)\lambda-\Delta(f)}{n-2}g.
\end{equation}

By tracing (\ref{e1}), we get
\begin{equation}\label{e3}
\scal=n(n-1)\lambda-2(n-1)\Delta(f)
\end{equation}
which by differentiating gives
\begin{equation}\label{e4}
d(\Delta(f))=\frac{n}{2}d\lambda -\frac{1}{2(n-1)}  d(\scal)
\end{equation}
and by taking the gradient of (\ref{e3})
\begin{equation}\label{e6}
\grad(\Delta(f))=\frac{n}{2}\grad(\lambda) -\frac{1}{2(n-1)}  \grad(\scal).
\end{equation}

Applying the divergence operator to (\ref{e1}) and using $\div(\Ric)=\frac{1}{2}d(\scal)$, we obtain
\begin{equation}
\div(\Hess(f))=\frac{n-1}{n-2}d\lambda-\frac{1}{n-2}d(\Delta(f))-\frac{1}{2(n-2)}d(\scal).
\end{equation}

Using the relation proved in \cite{blag}, namely
\begin{equation}
\div(\Hess(f))=d(\Delta(f))+i_{Q(\grad(f))}g,
\end{equation}
where $i$ denotes the interior product and $Q$ is the Ricci operator, $g(QX,Y):=\Ric(X,Y)$, we get
\begin{equation}\label{e10}
d(\Delta(f))=d\lambda-\frac{1}{2(n-1)}d(\scal)-\frac{n-2}{n-1}i_{Q(\grad(f))}g.
\end{equation}

Equating (\ref{e4}) and (\ref{e10}), we obtain:
\begin{proposition}
If $(V=\grad(f),\lambda)$ defines a gradient almost Riemann soliton on the $n$\text{-}di\-men\-sio\-nal Riemannian manifold $(M,g)$, $n\geq 3$, then
$$\grad(\lambda)=-\frac{2}{n-1}Q(\grad(f)).$$
\end{proposition}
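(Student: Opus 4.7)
The plan is to derive the claimed identity by combining two independent expressions for $d(\Delta(f))$, both of which are already essentially assembled in the calculation preceding the statement. The strategy has a single ingredient not already displayed: converting an equality of $1$-forms into an equality of vector fields by raising indices with $g$.

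First, I would rewrite the almost Riemann soliton equation (\ref{4}) with $V=\grad(f)$ as the Hessian equation (\ref{e1}), then take the metric trace to obtain the scalar relation (\ref{e3}) and differentiate it to get the first expression (\ref{e4}) for $d(\Delta(f))$ in terms of $d\lambda$ and $d(\scal)$. Next, I would apply the divergence operator to (\ref{e1}), invoking the contracted second Bianchi identity $\div(\Ric)=\tfrac{1}{2}d(\scal)$ and the Bochner-type formula $\div(\Hess(f))=d(\Delta(f))+i_{Q(\grad(f))}g$ from \cite{blag}, to obtain the second expression (\ref{e10}) for $d(\Delta(f))$, now also involving $i_{Q(\grad(f))}g$.

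Equating (\ref{e4}) and (\ref{e10}), the terms in $d(\scal)$ cancel, leaving an equation of the schematic form $\tfrac{n}{2}d\lambda = d\lambda - \tfrac{n-2}{n-1}\,i_{Q(\grad(f))}g$. Since $n\geq 3$, the coefficient $(n-2)/2$ of $d\lambda$ is nonzero and divides cleanly into $(n-2)/(n-1)$, yielding $d\lambda=-\tfrac{2}{n-1}\,i_{Q(\grad(f))}g$. Raising indices (using $g(\grad(\lambda),\cdot)=d\lambda$ and $g(QX,\cdot)=i_{QX}g$) converts this $1$-form identity into the asserted vector-field identity $\grad(\lambda)=-\tfrac{2}{n-1}Q(\grad(f))$.

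No step is genuinely hard: the derivation is algebraic manipulation of identities already in hand. The only place where care is needed is ensuring that the two appearances of $d(\scal)$ really do cancel with matching coefficient $-1/(2(n-1))$ in both (\ref{e4}) and (\ref{e10}); a slip there would produce a spurious curvature term. Beyond that, one must simply keep track of the combinatorial factors, verify that the coefficient $(n-2)/2$ on $d\lambda$ is invertible (which is why the hypothesis $n\geq 3$ enters), and finish by the standard musical isomorphism identification of $1$-forms with vector fields.
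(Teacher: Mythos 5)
Your proposal is correct and follows exactly the paper's own route: the paper also obtains the result by equating \eqref{e4} with \eqref{e10} (the latter coming from the divergence of \eqref{e1}, the contracted Bianchi identity, and the formula $\div(\Hess(f))=d(\Delta(f))+i_{Q(\grad(f))}g$ from \cite{blag}), cancelling the $d(\scal)$ terms and solving for $d\lambda$. No difference in substance beyond your explicit mention of the musical isomorphism at the end.
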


In equation (\ref{4}), by taking the scalar product with $\Ric$, we get
$$\langle\pounds _{V}g,\Ric\rangle=\frac{1}{n-2}\{(n-1)[n(n-1)\lambda^2-(3n-2)\lambda \cdot \div(V)+2(\div(V))^2]-|\Ric|^2\}$$
and by taking the scalar product with $\pounds _{V}g$, we get
$$\langle\pounds _{V}g,\Ric\rangle=\frac{n-2}{4}\left\{\frac{4[(n-1)\lambda\cdot \div(V)-(\div(V))^2]}{n-2}-|\pounds _{V}g|^2\right\}.$$

If $V$ is a solenoidal vector field, then
$$\frac{1}{n-2}[n(n-1)^2\lambda^2-|\Ric|^2]=-\frac{n-2}{4}|\pounds _{V}g|^2,$$
which implies
$$|\Ric|^2\geq n(n-1)^2\lambda^2.$$

But $|\pounds _{V}g|^2=4|\nabla V|^2$, which gives
$$|\Ric|^2=n(n-1)^2\lambda^2+(n-2)^2|\nabla V|^2.$$

If we assume that $V$ is a solenoidal vector field of gradient type, then (\ref{15}) becomes
$$|\nabla V|^2=\frac{1}{2}\Delta(|V|^2)-\Ric(V,V),$$
therefore, using (\ref{13}), we obtain:

\begin{proposition}\label{p4}
Let $(V,\lambda)$ define a gradient almost Riemann soliton on the $n$\text{-}di\-men\-sio\-nal Riemannian manifold $(M,g)$, $n\geq 3$. If $V$ is a solenoidal vector field, then
$$|\Ric|^2=n(n-1)^2\lambda^2+\frac{(n-2)^2}{2}[\Delta(|V|^2)+(n-1)V(\lambda)].$$
\end{proposition}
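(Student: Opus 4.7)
The plan is to extract the claim directly from the chain of identities laid out immediately before the statement, which have already reduced the computation, under the solenoidal hypothesis $\div(V)=0$, to
$$|\Ric|^2 = n(n-1)^2 \lambda^2 + (n-2)^2 |\nabla V|^2.$$
Thus the only remaining task is to rewrite $|\nabla V|^2$ using the gradient nature of $V$ together with the soliton equation, so as to expose the geometric quantities appearing in the right-hand side of the proposition.

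The central step is to invoke the Bochner-type identity (\ref{15}) established earlier for gradient vector fields, which reads
$$\Delta(|V|^2) - 2|\nabla V|^2 = 2\Ric(V,V) + 2V(\div(V)).$$
Since $V$ is solenoidal, the last term vanishes and one obtains $|\nabla V|^2 = \frac{1}{2}\Delta(|V|^2) - \Ric(V,V)$. Next, I would substitute the gradient almost Riemann soliton identity (\ref{13}), namely $\Ric(V,V) = -\frac{n-1}{2}V(\lambda)$, which was derived without any divergence hypothesis and therefore remains valid here. This yields
$$|\nabla V|^2 = \frac{1}{2}\Delta(|V|^2) + \frac{n-1}{2} V(\lambda).$$
Multiplication by $(n-2)^2$ and insertion into the displayed expression for $|\Ric|^2$ produces exactly the claimed formula.

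There is essentially no serious obstacle, since the proof reduces to a bookkeeping exercise linking the preceding identities. The only subtlety worth checking is the provenance of the two ingredients used: (\ref{15}) was stated specifically for gradient vector fields, which matches our setting, while (\ref{13}) relies only on the gradient almost Riemann soliton equation and not on solenoidality, so both identities are simultaneously available. Once this compatibility is verified, the substitution closes without any further curvature manipulation.
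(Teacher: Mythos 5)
Your proposal is correct and follows the paper's own route: the paper likewise takes the identity $|\Ric|^2=n(n-1)^2\lambda^2+(n-2)^2|\nabla V|^2$ established just before the statement (via the scalar products of (\ref{4}) with $\Ric$ and $\pounds_V g$ and $|\pounds_V g|^2=4|\nabla V|^2$, valid since $V$ is gradient), then substitutes (\ref{15}) with $\div(V)=0$ and (\ref{13}) exactly as you do. No gaps; this matches the paper's argument.
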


From (\ref{14}) and Proposition \ref{p4}, we get:
\begin{corollary}
If $(V,\lambda)$ defines a gradient almost Riemann soliton on the $n$\text{-}di\-men\-sio\-nal Riemannian manifold $(M,g)$, $n\geq 3$, with $V$ a solenoidal vector field, then
$$|\Ric|^2=n(n-1)^2\lambda^2-(n-1)(n-2)^2|V|^2\lambda+\frac{(n-2)^2}{2}[\Delta(|V|^2)+(n-2)V(|V|^2)].$$

Moreover, if $V$ is a unitary vector field, then $M$ is a Ricci-flat manifold.
\end{corollary}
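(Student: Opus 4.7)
The plan is to eliminate $V(\lambda)$ from the identity of Proposition~\ref{p4} by using equation~(\ref{14}) under the solenoidal assumption, and then specialize to unit length. Setting $\div(V)=0$ in (\ref{14}) and clearing the factor $|V|^2$ from the denominators yields
$$(n-1)V(\lambda)=(n-2)V(|V|^2)-2(n-1)|V|^2\lambda.$$
Substituting this expression for $(n-1)V(\lambda)$ into
$$|\Ric|^2=n(n-1)^2\lambda^2+\frac{(n-2)^2}{2}\bigl[\Delta(|V|^2)+(n-1)V(\lambda)\bigr]$$
and regrouping the $\lambda$-linear contribution produces the announced formula in one step.

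For the ``Moreover'' part, $|V|\equiv 1$ makes both $V(|V|^2)$ and $\Delta(|V|^2)$ vanish, so the identity collapses to
$$|\Ric|^2=n(n-1)^2\lambda^2-(n-1)(n-2)^2\lambda=(n-1)\lambda\bigl[n(n-1)\lambda-(n-2)^2\bigr].$$
To push this to $\Ric\equiv 0$, I would extract the companion relations available in the unit, solenoidal, gradient setting: equation~(\ref{14}) reduces to $V(\lambda)=-2\lambda$; equation~(\ref{13}) then yields $\Ric(V,V)=(n-1)\lambda$; and the Bochner identity~(\ref{15}), with $\Delta(|V|^2)=0$ and $\div(V)=0$, gives $|\nabla V|^2=-(n-1)\lambda\ge 0$, forcing $\lambda\le 0$.

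The main obstacle is this final upgrade from $\lambda\le 0$ to $\lambda\equiv 0$: the collapsed polynomial for $|\Ric|^2$ is automatically non-negative throughout $(-\infty,0]$, so non-negativity of $|\Ric|^2$ alone is insufficient. An extra rigidity argument is needed---for instance, testing the full (uncontracted) soliton equation~(\ref{3}) in a frame adapted to the eigenvector identity $Q(V)=(n-1)\lambda V$ obtained by pairing the soliton equation with $V$, or integrating the first-order ODE $V(\lambda)=-2\lambda$ along the flow of $V$---to rule out $\lambda<0$ and thereby conclude that the formula forces $|\Ric|^2=0$, i.e.\ $\Ric=0$.
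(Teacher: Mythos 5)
For the displayed identity your argument is exactly the paper's: the paper's entire justification is to substitute the solenoidal form of (\ref{14}), i.e. $(n-1)V(\lambda)=(n-2)V(|V|^2)-2(n-1)\lambda|V|^2$, into Proposition \ref{p4}, which is precisely your first step, and your algebra is correct.

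The ``Moreover'' clause is where your proposal has a genuine gap, and you correctly diagnose it yourself. Setting $|V|\equiv 1$ only collapses the formula to $|\Ric|^2=(n-1)\lambda\,[n(n-1)\lambda-(n-2)^2]$, and the companion identities you list, $V(\lambda)=-2\lambda$, $\Ric(V,V)=(n-1)\lambda$, $|\nabla V|^2=-(n-1)\lambda$, only yield $\lambda\le 0$; since the right-hand side is automatically nonnegative on $(-\infty,0]$, positivity of $|\Ric|^2$ cannot force $\lambda\equiv 0$, so Ricci-flatness does not follow from these relations alone. Your suggested repairs are not carried out: integrating $V(\lambda)=-2\lambda$ (equivalently $d\lambda=-2\lambda\,df$, which follows from $\grad(\lambda)=-\tfrac{2}{n-1}Q(\grad(f))$ and $Q(V)=(n-1)\lambda V$) gives only $\lambda=Ce^{-2f}$ with $C\le 0$, again not $\lambda\equiv 0$, and you do not exhibit the ``extra rigidity argument'' needed to exclude $C<0$. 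Note that the paper itself offers no argument for this clause either---it is stated as an immediate consequence of (\ref{14}) and Proposition \ref{p4}---so you have not missed a short step that the paper supplies; but as written your proof establishes only the displayed formula and $\lambda\le 0$, not the Ricci-flatness claim. The one situation where the conclusion is immediate in this framework is the non-``almost'' case of constant $\lambda$: then (\ref{14}) with $\div(V)=0$ and $|V|=1$ gives $(n-1)\lambda=0$ directly, and the formula yields $|\Ric|^2=0$; for the almost case as stated, a further argument (e.g. analyzing the full equation (\ref{3}), not just its traces, along the geodesics generated by the unit gradient field) would have to be supplied before the ``Moreover'' part can be considered proved.
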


Remark that Petersen \cite{pe} called $f$ a \textit{distance function} if it is a solution of the Hamilton-Jacobi equation $|\grad_g(f)|^2_g=1$, which he uses in his book.

\subsection{Solitons with torse-forming potential vector field}

Assume next that $V$ is a torse-forming vector field, i.e. $\nabla V=aI+\psi\otimes V$, with $a$ a smooth function and $\psi$ a $1$-form, where $\nabla$ is the Levi-Civita connection of $g$ and $I$ is the identity endomorphism on the space of vector fields.
Then the Lie derivative of $g$ in the direction of $V$ is
\begin{equation}\label{h}
\pounds _{V}g=2ag+\psi\otimes \theta+\theta\otimes \psi,
\end{equation}
where $\theta$ is the $g$-dual $1$-form of $V$.
Also $\div(V)=na+\psi(V)$ and (\ref{4}) becomes
$$\frac{1}{2}\pounds _{V}g+\frac{1}{n-2}\Ric=\frac{[(n-1)\lambda-na-\psi(V)]}{n-2}g$$
and replacing $\pounds _{V}g$ from (\ref{h}), we get
\begin{equation}\label{7}
\Ric=[(n-1)(\lambda-2a)-\psi(V)]g-\frac{n-2}{2}(\psi\otimes \theta+\theta\otimes \psi),
\end{equation}
$$Q=[(n-1)(\lambda-2a)-\psi(V)]I-\frac{n-2}{2}(\psi\otimes V+\theta\otimes \zeta),$$
hence
$$\scal=(n-1)[n(\lambda-2a)-2\psi(V)],$$
where $\zeta$ is the $g$-dual of $\psi$.

From (\ref{3}), for $\nabla V=aI+\psi\otimes V$, we also obtain
$$R(X,Y)V=J(X)Y-J(Y)X+[(\nabla_X\psi)Y-(\nabla_Y\psi)X]V,$$
for any $X,Y\in \mathfrak{X}(M)$, where $J:=da-a\psi$.

Moreover, if $\psi$ is a Codazzi tensor field, i.e. $(\nabla_X\psi)Y=(\nabla_Y\psi)X$, for any $X,\linebreak[3] Y\in \nolinebreak\mathfrak{X}(M)$, then
the Jacobi operator $R(\cdot,V)V$ w.r.t. $V$ is given by
$$R(\cdot,V)V=da\otimes V-V(a)I-a[\psi\otimes V-\psi(V)I]$$
and
\begin{equation}\label{5}
\func{Ric}(V,V)=(1-n)[V(a)-a\psi(V)].
\end{equation}

But if $(V,\lambda)$ defines an almost Riemann soliton with torse-forming potential vector
field $V$, then
\begin{equation}\label{6}
\func{Ric}(V,V)=(n-1)[(\lambda-2a)-\psi(V)]|V|^2,
\end{equation}%
and we can state:

\begin{proposition}
\label{p} If $(V,\lambda)$ defines an almost Riemann
soliton on the $n$-di\-men\-sional Riemannian manifold $(M,g)$, $n\geq 3$, such that the potential vector field $V$ is torse-forming and $\psi$ is a Codazzi tensor field, then
$$\lambda=2a+\frac{1}{|V|^2}[(a+|V|^2)\psi(V)-V(a)].$$

Moreover, if $V$ is a concircular vector field, i.e. $\nabla V=aI$, with $a$ a non zero constant, then

i) $\lambda=2a$;

ii) $M$ is a Ricci-flat manifold (i.e. $\Ric=0$).
\end{proposition}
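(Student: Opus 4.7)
The plan is to exploit the fact that the excerpt already provides two independent expressions for the Ricci quadratic form $\Ric(V,V)$. Formula (\ref{5}) is obtained from the curvature identity $R(X,Y)V=J(X)Y-J(Y)X$ specialized via the Jacobi operator $R(\cdot,V)V$ under the Codazzi hypothesis on $\psi$; formula (\ref{6}) is obtained purely algebraically from the soliton consequence (\ref{7}) by evaluating on the pair $(V,V)$ (using $\theta(V)=|V|^2$). Since (\ref{5}) uses the Codazzi assumption but not the soliton equation, while (\ref{6}) uses the soliton equation but not the Codazzi assumption, their simultaneous validity gives a non-trivial scalar identity.

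First I would equate (\ref{5}) and (\ref{6}):
$$(1-n)[V(a)-a\psi(V)]=(n-1)[(\lambda-2a)-\psi(V)]|V|^2.$$
Since $n\geq 3$, the factor $(n-1)$ is non-zero and can be divided out, producing
$$-[V(a)-a\psi(V)]=[(\lambda-2a)-\psi(V)]|V|^2.$$
Solving for $\lambda-2a$ (legitimate because $V$ is non-vanishing by the definition of a soliton, so $|V|^2>0$) and collecting the $\psi(V)$ terms yields exactly
$$\lambda=2a+\frac{1}{|V|^2}\bigl[(a+|V|^2)\psi(V)-V(a)\bigr],$$
which is the first assertion.

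For the concircular case $\nabla V=aI$ with $a$ a non-zero constant, one has $\psi\equiv 0$ and $V(a)=0$. Substituting these into the displayed formula for $\lambda$ instantly collapses it to $\lambda=2a$, proving (i). For (ii) I would return to (\ref{7}): with $\psi=0$ the tensorial correction $\psi\otimes\theta+\theta\otimes\psi$ vanishes, while the scalar coefficient $(n-1)(\lambda-2a)-\psi(V)$ also vanishes by (i), so $\Ric=0$.

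There is no real obstacle here: once (\ref{5}) and (\ref{6}) are available the proof is essentially an algebraic manipulation. The only points requiring care are the use of $n\geq 3$ to cancel $(n-1)$ and the use of non-vanishing of $V$ to divide by $|V|^2$, both of which are built into the hypotheses; and the verification for (ii) that the concircular specialization kills both the scalar and the tensorial parts of the right-hand side of (\ref{7}) simultaneously.
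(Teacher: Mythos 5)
Your proposal is correct and follows exactly the paper's own route: equating the two expressions (\ref{5}) and (\ref{6}) for $\Ric(V,V)$ and solving for $\lambda$, then specializing $\psi\equiv 0$, $V(a)=0$ in the concircular constant-$a$ case to get $\lambda=2a$ and $\Ric=0$ from (\ref{7}). Your explicit remarks on dividing by $n-1$ and $|V|^2$ only make explicit what the paper leaves implicit.
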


\begin{proof}
From (\ref{5}) and (\ref{6}) we get the expression of $\lambda$.

If $V$ is concircular and $a$ is a non zero constant, then
$\lambda=2a$ and from (\ref{7}) we get $\Ric=0$.
\end{proof}

\begin{remark}
i) If $V$ is concircular, i.e. $\nabla V=aI$, with $a$ a smooth function, $a(x)\neq 0$, for any
$x\in M$, then
\begin{equation*}
V=\frac{1}{2a}\func{grad}(|V|^{2})
\end{equation*}%
whose divergence is $\func{div}(V)=na$.

ii) If $(V,\lambda)$ defines an almost Riemann soliton on $(M,g)$, $n\geq 3$, $V$ is torse-forming and $g$-orthogonal to $\zeta$, and $\psi$ is a Codazzi tensor field, then $M$ is an almost quasi-Einstein manifold with associated functions $$\left(-(n-1)\frac{V(a)}{|V|^2},-\frac{n-2}{2}\right).$$
\end{remark}

Next we shall find a condition such that an almost Ricci soliton to provide an almost Riemann soliton. Recall that the conharmonic curvature tensor $\mathcal{H}$ was defined in \cite{po}
\begin{equation}\label{e32}
\mathcal{H}(X,Y)Z:=R(X,Y)Z$$
$$+\frac{1}{\dim(M)-2}[g(Z,X)QY-g(Y,Z)QX+\Ric(Z,X)Y-\Ric(Y,Z)X],
\end{equation}
for any $X$, $Y$, $Z\in \mathfrak{X}(M)$.

\begin{proposition}\label{p3}
Let $(V,\lambda)$ define an almost Ricci soliton on the $n$\text{-}di\-men\-sio\-nal Riemannian manifold $(M,g)$, $n\geq 3$. Then $(V,2\lambda)$ defines an almost Riemann soliton if and only if the conharmonic curvature tensor field $\mathcal{H}$ of $g$ satisfies $\mathcal{H}=\frac{n-3}{n-2}R$.
\end{proposition}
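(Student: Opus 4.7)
The plan is to show that both the forward and backward implications reduce to the same intermediate identity, namely $R=\Ric\odot g$, and then identify this identity with the stated conharmonic condition. The almost Ricci soliton hypothesis gives $\tfrac12\pounds_V g=\lambda g-\Ric$, which I will use as a substitution tool throughout.

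First I would take the Kulkarni–Nomizu product of the Ricci soliton equation with $g$ to obtain
\[
\tfrac12\pounds_V g\odot g=\lambda\,g\odot g-\Ric\odot g.
\]
The Riemann soliton equation for $(V,2\lambda)$ reads $\tfrac12\pounds_V g\odot g+R=2\lambda G=\lambda\, g\odot g$. Subtracting the displayed identity from this gives the clean equivalence: $(V,2\lambda)$ is an almost Riemann soliton if and only if $R=\Ric\odot g$. So the whole problem collapses to checking that $R=\Ric\odot g$ is equivalent to $\mathcal{H}=\tfrac{n-3}{n-2}R$.

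Next, I would rewrite the defining formula (\ref{e32}) for $\mathcal{H}$ in its fully covariant $(0,4)$ form by pairing with $W$ and regrouping the four terms against the expansion of $(\Ric\odot g)(X,Y,Z,W)$ given by the Kulkarni–Nomizu definition of the paper. The bookkeeping of the four symmetry terms yields the compact identity
\[
\mathcal{H}=R-\frac{1}{n-2}\,\Ric\odot g.
\]
Substituting this into the proposed conharmonic equation $\mathcal{H}=\tfrac{n-3}{n-2}R$ gives $R-\tfrac{1}{n-2}\Ric\odot g=\tfrac{n-3}{n-2}R$, which after moving $R$ across simplifies to $\tfrac{1}{n-2}R=\tfrac{1}{n-2}\Ric\odot g$, i.e. exactly $R=\Ric\odot g$. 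Combining this with the reduction of the first paragraph closes both implications.

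The only step that requires care, and which I expect to be the main obstacle, is the sign/symmetry bookkeeping when converting the $(1,3)$ conharmonic tensor to its $(0,4)$ form and matching the four summands against the Kulkarni–Nomizu convention used in this paper: the terms $\Ric(Z,X)g(Y,W)$ and $-\Ric(Y,Z)g(X,W)$ need to be lined up with the $T_1(X,W)T_2(Y,Z)$-type entries of the product. Once that identification is done correctly, the equivalence $\mathcal{H}=\tfrac{n-3}{n-2}R\iff R=\Ric\odot g$ is immediate, and the proposition follows from the Ricci soliton substitution performed in the first paragraph.
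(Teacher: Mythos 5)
Your proposal is correct and follows essentially the same route as the paper: apply the Kulkarni--Nomizu product with $g$ to the Ricci soliton equation to reduce the Riemann soliton condition for $(V,2\lambda)$ to $R=\Ric\odot g$, and then identify this with $\mathcal{H}=\frac{n-3}{n-2}R$ via the rewriting $\mathcal{H}=R-\frac{1}{n-2}\Ric\odot g$ (which the paper does in $(1,3)$-tensor form). The sign bookkeeping you flag indeed works out with the paper's conventions, so the argument is complete.
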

\begin{proof}
Applying the Kulkarni-Nomizu product with $g$ to the Ricci soliton equation (\ref{1}), we deduce that $\Ric\odot g=R$ if and only if
$$R(X,Y)Z=g(Y,Z)QX-g(X,Z)QY+\Ric(Y,Z)X-\Ric(X,Z)Y,$$
for any $X,Y,Z\in \mathfrak{X}(M)$, which gives the conclusion.

\end{proof}

\begin{corollary}
If $(V,\lambda)$ defines an almost Ricci soliton on the $n$\text{-}di\-men\-sio\-nal Riemannian manifold $(M,g)$, $n\geq 3$, with $V$ a concircular vector field, then $(V,2\lambda)$ defines an almost Riemann soliton if and only if
$$R(X,Y)Z=2(\lambda-a)[g(Y,Z)X-g(X,Z)Y],$$
for any $X,Y,Z\in \mathfrak{X}(M)$.
Moreover, we get

i) $V(a)\theta=|V|^2da$;

ii) $\grad(a)=V(a)\frac{V}{|V|^2}$;

iii) $M$ is of constant scalar curvature.
\end{corollary}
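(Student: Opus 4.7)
The plan is to combine Proposition \ref{p3} with the identities forced by the concircular condition $\nabla V = aI$. Since $\nabla V = aI$ gives $\pounds_V g = 2ag$, the almost Ricci soliton hypothesis reduces to $\Ric = (\lambda-a)g$, equivalently $Q = (\lambda-a)I$. Substituting these into the equivalent form of the condition from the proof of Proposition \ref{p3}, namely
$$R(X,Y)Z = g(Y,Z)QX - g(X,Z)QY + \Ric(Y,Z)X - \Ric(X,Z)Y,$$
collapses the right-hand side to $2(\lambda-a)[g(Y,Z)X - g(X,Z)Y]$, which gives the stated equivalence.

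For (i) and (ii), I compute $R(X,Y)V$ in two ways. Directly from $\nabla_X V = aX$ and torsion-freeness of $\nabla$, a short calculation yields $R(X,Y)V = X(a)Y - Y(a)X$; on the other hand, setting $Z = V$ in the curvature formula gives $R(X,Y)V = 2(\lambda-a)[\theta(Y)X - \theta(X)Y]$. Equating and regrouping produces $\omega(X)Y = \omega(Y)X$ for all $X, Y$, where $\omega := da + 2(\lambda-a)\theta$. In dimension $n \geq 3$, choosing $X, Y$ linearly independent forces $\omega \equiv 0$, so $da = -2(\lambda-a)\theta$. Evaluating this at $V$ yields $V(a) = -2(\lambda-a)|V|^2$; eliminating the factor $2(\lambda-a)$ between the two relations produces $|V|^2\, da = V(a)\, \theta$, which is (i), and (ii) is immediate by taking the $g$-dual.

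For (iii), the curvature identity shows that $(M,g)$ has pointwise constant sectional curvature $K := 2(\lambda-a)$. I would then invoke Schur's theorem: the twice-contracted second Bianchi identity $2\div(\Ric) = d(\scal)$, combined with the consequences $\Ric = (n-1)K g$ and $\scal = n(n-1)K$ of the curvature formula, gives $(n-2)\, dK = 0$, so $K$ --- and hence $\lambda - a$ --- is constant on the connected manifold $M$. Tracing $\Ric = (\lambda-a)g$ then yields $\scal = n(\lambda-a)$, which is constant. The sole conceptual input beyond Proposition \ref{p3} is Schur's theorem in (iii); the only step requiring real care is the pointwise-algebraic extraction of $\omega \equiv 0$ in the proof of (i), which hinges crucially on $n \geq 3$.
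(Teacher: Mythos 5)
Your proposal is correct and follows essentially the same route as the paper: reduce the almost Ricci soliton equation with $\nabla V=aI$ to $\Ric=(\lambda-a)g$, feed this into Proposition \ref{p3} to get the stated curvature identity, compare that identity (with $V$ inserted) against the direct computation of $R(\cdot,\cdot)V$ from $\nabla V=aI$ to obtain i)--ii), and invoke $\div(\Ric)=\frac{1}{2}d(\scal)$ for iii). The only harmless variations are that you derive the full relation $da=-2(\lambda-a)\theta$ rather than contracting $R(X,V)V$ with $\theta$ as the paper does, and in iii) you run the Schur argument on the traced curvature identity $\Ric=(n-1)Kg$, whereas the paper applies the contracted Bianchi identity directly to $\Ric=(\lambda-a)g$, so its iii) needs only the Ricci soliton hypothesis.
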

\begin{proof}
If $V$ is concircular, from the Ricci soliton equation (\ref{1}) we get
$$\Ric=(\lambda-a)g, \ \ Q=(\lambda-a)I, \ \ \scal=n(\lambda-a).$$
Then from Proposition \ref{p3}, we deduce that the Riemann soliton equation (\ref{3}) is satisfied for $(V,2\lambda)$ if and only if
$$R(X,Y)Z=2(\lambda-a)[g(Y,Z)X-g(X,Z)Y],$$
for any $X,Y,Z\in \mathfrak{X}(M)$. Since
$$R(X,V)V=X(a)V-V(a)X,$$
for any $X\in \mathfrak{X}(M)$, we have
$$[2(\lambda-a)|V|^2+V(a)]X=[2(\lambda-a)\theta(X)+X(a)]V,$$
where $\theta=i_Vg$, and applying $\theta$, we get i) and ii).
Also, taking into account that $\div(\Ric)=\frac{1}{2}d(\scal)$, we get $d\lambda=da$, hence $d(\scal)=0$ and we deduce iii).

\end{proof}

Assume that the potential vector field $V$ of the Riemann soliton defined by $(V,\lambda)$ is a torse-forming vector field, i.e. $\nabla V=aI+\psi\otimes V$, with $a$ a smooth function and $\psi$ a $1$-form, where $\nabla$ is the Levi-Civita connection of $g$. Let $\theta=i_Vg$ and $i_{\zeta}g=\psi$.
Notice that
$$\nabla \theta=ag+\psi\otimes \theta,$$
$$(\nabla_V\psi)V+(\nabla_V\theta)\zeta=V(\psi(V)), \ \ (\nabla_{\zeta}\psi)V+(\nabla_{\zeta}\theta)\zeta=\zeta(\psi(V)),$$
$$(\nabla_V\theta)\zeta\cdot |\zeta|^2=(\nabla_{\zeta}\theta)\zeta\cdot \psi(V).$$
\pagebreak
In particular, if $V$ and $\zeta$ are $g$-orthogonal, then $a=-\frac{1}{2|V|^2}V(|V|^2)$.

But differentiating covariantly (\ref{7}), we get
$$
(\nabla_X\Ric)(Y,Z)=\{(n-1)[X(\lambda)-2X(a)]-X(\psi(V))\}g(Y,Z)
$$
\begin{equation}\label{44}
-\frac{n-2}{2}[\psi(Y)(\nabla_X\theta)Z+\psi(Z)(\nabla_X\theta)Y+\theta(Y)(\nabla_X\psi)Z+\theta(Z)(\nabla_X\psi)Y],
\end{equation}
for any $X,Y,Z\in \mathfrak{X}(M)$ and we can state:
\begin{proposition}
If $(V,\lambda)$ defines an almost Riemann soliton with concircular potential vector field $V$, then

i) $M$ is Ricci symmetric (i.e. $\nabla \Ric=0$) if and only if $d\lambda=2da$;

ii) the Ricci tensor field $\Ric$ is $\theta$-recurrent (i.e. $\nabla \Ric =\theta\otimes \Ric$) if and only if $$\grad(\lambda-2a)=(\lambda-2a)V;$$

iii) $\Ric$ is a Codazzi tensor field (i.e. $(\nabla_X\Ric)(Y,Z)=(\nabla_Y\Ric)(X,Z)$, for any $X,Y,Z\in \mathfrak{X}(M)$) if and only if $$d(\lambda-2a)\otimes I=I\otimes d(\lambda-2a);$$

iv) $M$ has cyclic Ricci tensor (i.e. $(\nabla_X\Ric)(Y,Z)+(\nabla_Y\Ric)(Z,X)\linebreak[2]+(\nabla_Z\Ric)(X,Y)\linebreak[0] =0$, for any $X,Y,Z\in \mathfrak{X}(M)$) implies
$$\grad(\lambda-2a)=-2V(\lambda-2a)\frac{V}{|V|^2}.$$
\end{proposition}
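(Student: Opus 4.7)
The plan is to specialize equations (\ref{7}) and (\ref{44}) to the concircular case by setting $\psi = 0$ (so that $\nabla V = aI$), and to read off all four statements from the resulting simple identity. Under $\psi = 0$, equation (\ref{7}) collapses to $\Ric = (n-1)(\lambda - 2a)g$, and in (\ref{44}) every term on the second line vanishes (since $\psi(Y)=\psi(Z)=0$ and $\nabla_X\psi = 0$) while $X(\psi(V)) = 0$ in the first line; one is left with the single identity
$$(\nabla_X \Ric)(Y,Z) = (n-1)\,d(\lambda - 2a)(X)\,g(Y,Z)$$
for all $X, Y, Z \in \mathfrak{X}(M)$. This formula drives everything that follows, so once it is in hand the remainder is just a matter of substituting the four curvature conditions and extracting the consequences.

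For (i), the Ricci-symmetric condition forces the right side of the above to vanish identically; since $g$ is non-degenerate, this holds iff $d(\lambda - 2a) = 0$, that is, $d\lambda = 2da$. For (ii), the $\theta$-recurrence equation $(\nabla_X \Ric)(Y,Z) = \theta(X)\Ric(Y,Z)$, combined with $\Ric = (n-1)(\lambda - 2a)g$, becomes $d(\lambda - 2a)(X) = (\lambda - 2a)\theta(X)$ for every $X$, whose $g$-dual is precisely $\grad(\lambda - 2a) = (\lambda - 2a)V$. For (iii), the Codazzi identity $(\nabla_X \Ric)(Y,Z) = (\nabla_Y \Ric)(X,Z)$ reads as $d(\lambda - 2a)(X)\,g(Y,Z) = d(\lambda - 2a)(Y)\,g(X,Z)$, which is the stated tensorial symmetry.

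Finally, for (iv), the cyclic condition together with the above identity yields
$$d(\lambda - 2a)(X)\,g(Y,Z) + d(\lambda - 2a)(Y)\,g(Z,X) + d(\lambda - 2a)(Z)\,g(X,Y) = 0.$$
Specializing $Y = Z = V$ and using $\theta(X) = g(X,V)$ gives $d(\lambda - 2a)(X)\,|V|^2 + 2\,V(\lambda - 2a)\,\theta(X) = 0$, and raising the index produces the claimed relation $\grad(\lambda - 2a) = -2\,V(\lambda - 2a)\,V/|V|^2$. There is no serious obstacle: the entire proof is a bookkeeping exercise once the torse-forming formula (\ref{44}) is simplified under $\psi = 0$. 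The only place requiring minor care is part (iv), where one must choose the right trial vectors to isolate the gradient, and the symmetric choice $Y = Z = V$ accomplishes this in one line.
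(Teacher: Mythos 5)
Your proof is correct and takes essentially the same route as the paper: specializing (\ref{7}) and (\ref{44}) to $\psi=0$ to obtain $(\nabla_X\Ric)(Y,Z)=(n-1)\,d(\lambda-2a)(X)\,g(Y,Z)$ and then reading off each condition (the paper states this identity and leaves the remaining substitutions, including your $Y=Z=V$ choice in (iv), as immediate). No gaps.
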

\begin{proof}
If $V$ is concircular, from (\ref{44}) we get
$$(\nabla_X\Ric)(Y,Z)=(n-1)[X(\lambda)-2X(a)]g(Y,Z),$$
for any $X,Y,Z\in \mathfrak{X}(M)$ and we easily get the conclusions.

\end{proof}

Next we shall assume the curvature condition $R(V,\cdot)\cdot \Ric=0$, where by $\cdot$ we denote the derivation of the tensor algebra at each point of the tangent space.

\begin{proposition}\label{p2}
Let $(V,\lambda)$ define an almost Riemann soliton on the $n$\text{-}di\-men\-sio\-nal Riemannian manifold $(M,g)$, $n\geq 3$, with torse-forming potential vector field $V$. If $R(V,\cdot)\cdot \Ric=0$, then
$$\lambda=\frac{1}{2}\cdot\frac{V(|V|^2)}{|V|^2}+a, \ \text{or} \ a=\frac{1}{2}\cdot\frac{V(|V|^2)}{|V|^2}\pm |V|\cdot |\zeta|.$$
\end{proposition}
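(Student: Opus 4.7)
The plan is to convert the curvature condition $R(V,\cdot)\cdot\Ric=0$ into the vanishing $R(\cdot,W)V=0$ for a specific vector $W$ built out of $V$ and $\zeta$, and then to split into two cases according to whether $V$ and $W$ are linearly dependent. Substituting the explicit form (\ref{7}) of $\Ric$ into
\[
(R(V,X)\cdot\Ric)(Y,Z)=-\Ric(R(V,X)Y,Z)-\Ric(Y,R(V,X)Z)=0,
\]
the part proportional to $g$ drops out immediately because $R(V,X)$ is $g$-skew, and what remains (after cancelling the nonzero factor $-\tfrac{n-2}{2}$) is
\[
\psi(R(V,X)Y)\theta(Z)+\theta(R(V,X)Y)\psi(Z)+\psi(Y)\theta(R(V,X)Z)+\theta(Y)\psi(R(V,X)Z)=0.
\]
Setting $Y=Z=V$ (and using $\theta(R(V,X)V)=0$) first yields $\psi(R(V,X)V)=0$, and then keeping only $Y=V$ reduces the identity to
\[
|V|^{2}\psi(R(V,X)Z)+\psi(V)\,\theta(R(V,X)Z)=0\qquad\text{for all }X,Z.
\]

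Setting $W:=|V|^{2}\zeta+\psi(V)V$, the previous line says $g(R(V,X)Z,W)=0$ for all $X,Z$; by the pair symmetry $g(R(V,X)Z,W)=g(R(Z,W)V,X)$ of the Riemann tensor, this is equivalent to $R(Z,W)V=0$ for every $Z$. A direct calculation from $\nabla V=aI+\psi\otimes V$ gives
\[
R(X,Y)V=J(X)Y-J(Y)X+d\psi(X,Y)V,\qquad J:=da-a\psi,
\]
so the surviving condition is the vector equation
\[
J(Z)W-J(W)Z+d\psi(Z,W)V=0\qquad\text{for all }Z.
\]

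The remainder is a pointwise case analysis. If $W$ is a scalar multiple of $V$ (including $W=0$), then $\zeta$ is also a scalar multiple of $V$; writing $\zeta=\alpha V$ one computes $\psi(V)=\alpha|V|^{2}$ and $|\zeta|^{2}=\alpha^{2}|V|^{2}$, so $\psi(V)^{2}=|V|^{2}|\zeta|^{2}$, which together with $V(|V|^{2})=2(a+\psi(V))|V|^{2}$ (coming from $\nabla_{V}V=(a+\psi(V))V$) rearranges to $a=\tfrac{1}{2}\tfrac{V(|V|^{2})}{|V|^{2}}\pm|V|\cdot|\zeta|$, the second alternative. If instead $V$ and $W$ are linearly independent, I pick a nonzero $Z\in\{V,W\}^{\perp}$, possible for $n\geq 3$; since $\{V,W,Z\}$ are then linearly independent, the vector equation forces $J(W)=J(Z)=d\psi(Z,W)=0$, while substituting $Z=V$ into the same equation forces $J(V)=0$ as well, so $J\equiv 0$ and in particular $V(a)-a\psi(V)=0$. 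Tracing $X\mapsto R(X,V)V$ in the displayed formula (noting that the $d\psi(X,V)V$ term contributes $d\psi(V,V)=0$ to the trace) yields $\Ric(V,V)=-(n-1)[V(a)-a\psi(V)]=0$; comparing with the value $\Ric(V,V)=(n-1)[\lambda-2a-\psi(V)]|V|^{2}$ read off from (\ref{7}) gives $\lambda=2a+\psi(V)=a+\tfrac{1}{2}\tfrac{V(|V|^{2})}{|V|^{2}}$, the first alternative. The main obstacle I anticipate is justifying the trace identity $\Ric(V,V)=(1-n)[V(a)-a\psi(V)]$ at this last step, since the paper derives it in (\ref{5}) only under a Codazzi hypothesis on $\psi$; fortunately the trace argument just sketched shows that it actually holds for every torse-forming $V$, because the additional term $d\psi(X,V)V$ contributes zero to the trace.
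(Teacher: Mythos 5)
Your proposal is correct, and it reaches the dichotomy by a genuinely different route than the paper. The paper exploits the fact that for a torse-forming $V$ the soliton equation (\ref{3}) determines the \emph{full} curvature tensor: it writes out $R(V,Y)Z$ explicitly (formula (\ref{8})), substitutes it into the derivation condition (\ref{88}), sets $Y=Z=V$, and, after computing $\Ric(\cdot,V)$ from (\ref{7}), arrives at the factored scalar identity $[\lambda-2a-\psi(V)][\psi(V)\theta(X)-|V|^2\psi(X)]=0$, from which the two alternatives are read off directly. You instead use the soliton equation only through its Ricci-level consequence (\ref{7}): the skew-adjointness of $R(V,X)$ kills the $g$-part of $\Ric$ in the derivation condition, the specializations $Y=Z=V$ and $Y=V$ compress everything into $g(R(V,X)Z,W)=0$ with $W=|V|^2\zeta+\psi(V)V$, and the pair symmetry plus the intrinsic identity $R(X,Y)V=J(X)Y-J(Y)X+d\psi(X,Y)V$ (valid for any torse-forming field, no soliton needed) turns this into a pointwise linear-algebra alternative. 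Your dependent case $W\parallel V$ is literally the paper's second branch ($\zeta=\alpha V$ forces $\psi(V)V=|V|^2\zeta$), and your independent case forces $J(V)=0$, whence $\Ric(V,V)=(1-n)J(V)=0$ by the trace argument and the first branch follows from (\ref{6}); your observation that this trace identity — the paper's (\ref{5}) — holds for every torse-forming $V$ because $d\psi(V,V)=0$, i.e.\ without the Codazzi hypothesis, is correct and is a small strengthening of what the paper states. What the paper's computation buys is brevity (one substitution, one factorization); what yours buys is economy of input (no need for the soliton-determined full curvature (\ref{8})) at the cost of a case analysis and the choice of a vector orthogonal to $V$ and $W$, which is where $n\geq 3$ enters. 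Two cosmetic remarks: you only need $J(V)=0$, not $J\equiv 0$, at the final step; and, as in the paper's own proof, the disjunction you obtain is pointwise, with the case possibly varying from point to point.
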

\begin{proof}
If $V$ is torse-forming, from (\ref{3}) we get
$$2R(X,Y)Z=[2(\lambda-2a)g(Y,Z)-\psi(Y)\theta(Z)-\theta(Y)\psi(Z)]X$$
$$-[2(\lambda-2a)g(X,Z)-\psi(X)\theta(Z)-\theta(X)\psi(Z)]Y$$
$$-[g(Y,Z)\psi(X)-g(X,Z)\psi(Y)]V-[g(Y,Z)\theta(X)-g(X,Z)\theta(Y)]\zeta$$
and, in particular,
\begin{equation}\label{8}
2R(V,Y)Z=[2(\lambda-2a)-\psi(V)][g(Y,Z)V-\theta(Z)Y]-\psi(Z)[\theta(Y)V-|V|^2Y]
$$$$-[|V|^2g(Y,Z)-\theta(Y)\theta(Z)]\zeta.
\end{equation}

The condition that must be satisfied by $\Ric$ is:
\begin{equation}\label{88}
\Ric(R(V,X)Y,Z)+\Ric(Y,R(V,X)Z)=0,
\end{equation}
for any $X$, $Y$, $Z\in \mathfrak{X}(M)$.

Replacing the expression of $R(V,\cdot)\cdot$ from (\ref{8}) in (\ref{88}) we get:
$$
[2(\lambda-2a)-\psi(V)][g(X,Y)\Ric(V,Z)-\theta(Y)\Ric(X,Z)$$$$+g(X,Z)\Ric(V,Y)-\theta(Z)
\Ric(Y,X)]$$$$
-\psi(Y)\theta(X)\Ric(V,Z)+\psi(Y)|V|^2\Ric(X,Z)$$$$-\psi(Z)\theta(X)\Ric(V,Y)+\psi(Z)|V|^2\Ric(X,Y)$$
$$-[|V|^2g(X,Y)-\theta(X)\theta(Y)]\Ric(\zeta,Z)-[|V|^2g(X,Z)-\theta(X)\theta(Z)]\Ric(\zeta,Y)=0,$$
for any $X$, $Y$, $Z\in \mathfrak{X}(M)$.

For $Y=Z:=V$ we have:
$$[\lambda-2a-\psi(V)][\theta(X)\Ric(V,V)-|V|^2\Ric(X,V)]=0.$$

But from (\ref{7}):
$$
\Ric(X,V)=\left[(n-1)(\lambda-2a)-\frac{n}{2}\psi(V)\right]\theta(X)-\frac{n-2}{2}|V|^2\psi(X),
$$
which implies
$$[\lambda-2a-\psi(V)][\psi(V)\theta(X)-|V|^2\psi(X)]=0.$$

Then either $\psi(V)=\lambda-2a$ or
$\psi(V)V=|V|^2\zeta$.

But $\psi(V)=\frac{1}{2|V|^2}V(|V|^2)-a$, hence the conclusion.

\end{proof}

\bigskip

\textit{Adara M. Blaga}

\textit{Department of Mathematics}

\textit{West University of Timi\c{s}oara}

\textit{Bld. V. P\^{a}rvan nr. 4, 300223, Timi\c{s}oara, Rom\^{a}nia}

\textit{adarablaga@yahoo.com}

\end{document}